\newtheorem{theorem}{Theorem}
\newtheorem{corollary}[theorem]{Corollary}
\newenvironment{proof}[1][Proof]{\textbf{#1.} }{\ \rule{0.5em}{0.5em}}
\numberwithin{equation}{section}
\numberwithin{theorem}{section}
\begin{document}


\title{ Invariant subspaces for commuting operators in a real Banach space}

\author{ Victor Lomonosov and Victor Shulman}

\maketitle

\medskip

\medskip

{\bf Abstract.}

\medskip

It is proved that a commutative algebra $A$ of operators in a reflexive real Banach space has an invariant subspace if each operator $T\in A$ satisfies the condition
$$\|1- \varepsilon T^2\|_e \le 1 + o(\varepsilon) \text{ when  } \varepsilon\searrow 0,$$
where $\|\cdot\|_e$ is the essential norm.
This implies the existence of an invariant subspace for every commutative family of essentially selfadjoint operators in a real Hilbert space.

\medskip

\medskip

\section{Introduction }

One of the most known unsolved problems in the invariant subspace theory is the question of existence of a (non-trivial, closed) invariant subspace for an operator $T$ with compact imaginary part (= essentially selfadjoint operator = compact perturbation of a selfadjoint operator). There is a lot of papers concerning this subject; we only mention that the answer is affirmative for perturbations by operators from Shatten - von Neumann class $\mathfrak{S}_p$ (Livshits \cite{MSL} for $p=1$, Sahnovich    \cite{Sah} for $p=2$, Gohberg and Krein  \cite{GK}, Macaev  \cite{M1}, Schwartz \cite{Sch} --- for arbitrary $p$), or, more generally, from the Macaev ideal (Macaev  \cite{M2}). But the general question is still open.

It was proved in \cite{Lom2} that an essentially self-adjoint operator in a complex Hilbert space has an invariant real subspace. Then in \cite{Lom3} the following  general theorem of  Burnside type was proved:
\begin{theorem}\label{L1}
   Suppose that an algebra $A$  of bounded operators in a (real or complex) Banach space  $X$  is not dense in the algebra $\mathcal{B}(X)$ of all bounded operators on $X$ with respect to the weak operator topology (WOT). Then there are non-zero $x\in X^{**}, y\in X^*$, such that
\begin{equation}\label{ineq}
|(x,T^*y)| \le \|T\|_e \text{ for all } T\in A,
\end{equation}
 where $\|T\|_e$ is the essential norm of $~T$, that is $\|T\|_e = \inf\{\|T+K\|: K\in \mathcal{K}(X)\}$. Here $\mathcal{K}(X)$ is the ideal of all compact operators in $X$.
\end{theorem}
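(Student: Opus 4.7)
The plan is a proof by contradiction that combines Hahn--Banach, weak-$*$ compactness, and a minimax (or fixed-point) step in the spirit of Lomonosov's earlier Burnside-type theorems. The first step is to translate the hypothesis into concrete data: every WOT-continuous linear functional on $\mathcal{B}(X)$ has the form $\phi(T)=\sum_{i=1}^{n}y_{i}(Tx_{i})$ with $x_{i}\in X$ and $y_{i}\in X^{*}$, so by Hahn--Banach the non-WOT-density of $A$ produces such a $\phi\not\equiv 0$ vanishing on $A$. The desired conclusion is a rank-one quantitative bound $|(x,T^{*}y)|\le\|T\|_{e}$, so the argument must both reduce the ``rank'' $n$ of the annihilator to $1$ and upgrade a qualitative vanishing to a quantitative essential-norm bound.

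For the contradiction, assume no admissible pair $(x,y)$ exists, so that for every nonzero $x\in X^{**}$ and $y\in X^{*}$ there is $T_{x,y}\in A$ with $|(x,T_{x,y}^{*}y)|>\|T_{x,y}\|_{e}$; after rescaling we may take $\|T_{x,y}\|_{e}\le 1$ and $|(x,T_{x,y}^{*}y)|>1$. Working inside the finite-dimensional subspaces $E=\mathrm{span}(x_{1},\ldots,x_{n})\subset X\subset X^{**}$ and $F=\mathrm{span}(y_{1},\ldots,y_{n})\subset X^{*}$ attached to $\phi$, or inside weak-$*$ compact balls of $X^{**}$ and $X^{*}$, I would extract by compactness a finite family $T_{1},\dots,T_{N}\in A$ whose associated strict inequalities cover the relevant unit sphere. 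A Ky Fan-type minimax applied to the bilinear form $(x,y,\sigma)\mapsto\sum_{k}\sigma_{k}(x,T_{k}^{*}y)$ on (compact convex)$\times$(simplex) then produces a single convex combination $T^{\sigma}=\sum\sigma_{k}T_{k}\in A$ whose rank-one pairings uniformly exceed $1$. Pairing this $T^{\sigma}$ against $\phi$, together with a controlled compact-perturbation argument that absorbs the rank-$n$ data of $\phi$ into $\mathcal{K}(X)$, would then contradict $\phi(T^{\sigma})=0$.

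The principal difficulty is exactly this last ``compact absorption'' step: the WOT-annihilator $\phi$ provides a qualitative relation with no control in essential norm, while the sought conclusion requires a sharp bound in the Calkin quotient, so bridging the two preduals demands a careful selection of compact perturbations of the $T_{k}$ making the pairing $\phi(T^{\sigma})$ read off essential norm rather than operator norm. Secondary obstacles are the failure of joint weak-$*$ continuity of $(x,y)\mapsto(x,T^{*}y)$ on $B_{X^{**}}\times B_{X^{*}}$ (because elements of $X^{**}$ need not be weak-$*$ continuous on $X^{*}$ unless they lie in $X$), which complicates the cover extraction and may force an iterated argument in the two variables separately; and, in the real-scalar case, the absence of spectral or polarization shortcuts, so the entire argument must be carried out by geometric Hahn--Banach and fixed-point methods throughout.
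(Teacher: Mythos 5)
There is a genuine gap here, and it sits exactly where you say it does: the ``compact absorption'' step that you flag as the principal difficulty is not a technical detail to be filled in later --- it is the entire content of the theorem, and your plan contains no mechanism for it. The paper's proof (given in Section 3 for the stronger Theorem \ref{L4}) does not use a WOT-annihilator $\phi(T)=\sum y_i(Tx_i)$ at all. Instead it argues as follows: assume $A^*$ is transitive and set $F=\{T^*:\;T\in A,\ \|T\|_e<1\}$. If $Fy$ were norm-dense in $X^*$ for every $y\neq 0$, then for each $y$ in a suitable weak-$*$ compact ball $S$ one picks $T^*_y\in\varepsilon F$ moving $y$ close to the center of $S$, writes $T_y=R_y+K_y$ with $\|R_y\|<\varepsilon$ and $K_y$ compact, and uses the fact that $K_y^*$ is weak-$*$-to-norm continuous on bounded sets to get a weak-$*$ neighborhood $V_y$ that $T^*_y$ maps into $S$. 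A partition of unity and the Schauder--Tychonoff theorem then produce a convex combination $T^*$ with a fixed point in $S$, i.e.\ eigenvalue $1$, while $\|T^*\|_e<1/10$; the associated Riesz projection is a nonzero finite-rank operator in $A^*$, and the transitive-algebra theorem forces $A^*$ to be WOT-dense, a contradiction. Hence some $Fy_0$ is not dense, and the pair $(x,y)$ with the essential-norm bound is then produced by the Bishop--Phelps theorem applied to the closed convex set $\overline{Fy_0}$. The compactness of the $K_y$ is used to restore continuity, and the passage from operator norm to essential norm happens through the eigenvalue-versus-essential-spectral-radius comparison and through the definition of $F$ --- not through any pairing of $A$ against a finite-rank annihilator.

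Beyond the unfilled central step, two concrete pieces of your plan would fail as stated. First, the minimax step: knowing that for each $(x,y)$ \emph{some} $T_k$ satisfies $|(x,T_k^*y)|>1$ does not give a convex combination $T^\sigma=\sum\sigma_kT_k$ with $|(x,(T^\sigma)^*y)|>1$ uniformly, because the terms $(x,T_k^*y)$ can have opposite signs and cancel; in the paper the property being combined convexly is ``$T^*_{y_i}$ maps a neighborhood of $z$ into the ball $S$,'' which \emph{is} stable under convex combinations, and that is why the fixed-point argument closes. Second, even if you produced such a $T^\sigma$, the single scalar relation $\phi(T^\sigma)=\sum y_i(T^\sigma x_i)=0$ is perfectly compatible with individual rank-one pairings $|(x,(T^\sigma)^*y)|$ being large, so no contradiction results. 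The joint weak-$*$ discontinuity of $(x,y)\mapsto(x,T^*y)$ that you mention as a secondary obstacle is also fatal to your covering step; the paper avoids it by compactifying only in the $y$-variable and letting the compact parts $K_y$ supply the continuity.
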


Using this result and developing a special variational techniques, Simonic \cite{Sc} has obtained a significant progress in the topic: he proved that each essentially selfadjoint operator in a real Hilbert space has invariant subspace. Deep results based on Theorem  \ref{L1} were proved then by Atzmon \cite{A}, Atzmon, Godefroy  and Kalton  \cite{AGK}, Grivaux \cite{SofGr} and other mathematicians. Here we show that every commutative family of essentially selfadjoint operators in a real Hilbert space has an invariant subspace, and consider some analogs of this result for operators in Banach spaces. Our proof is very simple and short but it heavily depends on Theorem \ref{L1}. More precisely, we use the following improvement of Theorem \ref{L1} obtained in \cite{Lom4}:

\begin{theorem}\label{L4}
 If an algebra $A\subset \mathcal{B}(X)$   is not WOT-dense in $\mathcal{B}(X)$, then there are non-zero vectors  $x\in X^{**}, y\in X^*$, such that $(x,y) \geq 0$ and
\begin{equation}\label{ineq2}
|(x,T^*y)| \le \|T\|_e (x,y), \text{ for all } T\in A.
\end{equation}
\end{theorem}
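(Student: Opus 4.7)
My plan is to try to bootstrap from Theorem~\ref{L1} and see how far that gets us. Applied to $A$, Theorem~\ref{L1} yields non-zero $x_0 \in X^{**}$ and $y_0 \in X^*$ with $|(x_0, T^*y_0)| \le \|T\|_e$ for every $T \in A$. Since the inequality carries an absolute value, I can replace $y_0$ by $-y_0$ if necessary and arrange $(x_0, y_0) \ge 0$ at no cost. If $c := (x_0, y_0) \ge 1$, the desired conclusion follows immediately with $(x, y) := (x_0, y_0)$: the chain $|(x, T^*y)| \le \|T\|_e \le c\|T\|_e = \|T\|_e (x, y)$ gives exactly (\ref{ineq2}).

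The main obstacle lies in the range $0 \le c < 1$, where Theorem~\ref{L4} is genuinely stronger than Theorem~\ref{L1}. Note that rescaling by positive constants $\alpha, \beta > 0$ multiplies both $|(x_0, T^*y_0)|$ and $(x_0, y_0)$ by $\alpha\beta$, so the ratio of the two sides of the target inequality is a scale invariant and plain rescaling cannot bridge the gap. One natural response is to apply Theorem~\ref{L1} not to $A$ itself but to the enlarged algebra $\tilde A := A + \mathbb{R} I$; if $\tilde A$ is still not WOT-dense, then testing the new pair against $T = I$ gives the additional estimate $|(\tilde x_0, \tilde y_0)| \le \|I\|_e = 1$, and one can hope to combine this with a separation argument on the convex cone of ``admissible'' pairs in $X^{**} \times X^*$ to push the inner product into the useful regime.

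A more substantial approach, which I expect is the one actually required, is to revisit the variational/fixed-point argument that underpins Theorem~\ref{L1} and to build the positivity constraint $(x, y) \ge 0$, together with a suitable lower bound, directly into the compact convex subset of $X^{**} \times X^*$ on which the fixed-point map is iterated. The aim is for the resulting pair to be either strictly non-degenerate (with $(x, y) > 0$ large enough that the Theorem~\ref{L1} estimate automatically upgrades to (\ref{ineq2})) or degenerate in such a way that $(x, T^*y) = 0$ for every $T \in A$, in which case (\ref{ineq2}) holds trivially at the boundary value $(x, y) = 0$. The principal technical difficulty, in my view, will be ruling out \emph{half-degenerate} intermediate pairs, where $(x, y) = 0$ but $(x, T^*y) \ne 0$ for some $T \in A$; such pairs satisfy the conclusion of Theorem~\ref{L1} without satisfying that of Theorem~\ref{L4}, and excluding them appears to require either a careful tailoring of the fixed-point set or an additional compactness/convexity argument layered on top of Theorem~\ref{L1}.
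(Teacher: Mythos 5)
Your proposal is not a proof; it is a plan that stops exactly at the point where the real work begins. The first step (bootstrapping from Theorem~\ref{L1} when $(x_0,y_0)\ge 1$) is fine but covers only a case you cannot force to occur, and your own analysis correctly shows that rescaling is useless because the ratio of the two sides of (\ref{ineq2}) is scale invariant. For the remaining range $0\le (x_0,y_0)<1$ you offer two directions --- adjoining the identity and separating on a cone of ``admissible'' pairs, or rebuilding the fixed-point argument with the positivity constraint wired into the compact convex set --- but neither is carried out, and the ``half-degenerate'' pairs you identify ($(x,y)=0$ but $(x,T^*y)\ne 0$) are precisely the obstruction that the proposal leaves unresolved. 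As written, the argument establishes Theorem~\ref{L4} only in a case that Theorem~\ref{L1} does not guarantee, so there is a genuine gap.

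The missing idea, which the paper supplies, is to work not with the pair produced by Theorem~\ref{L1} but with the set $F=\{T^*\in A^*:\|T\|_e<1\}$ and the orbit $V=\overline{Fy_0}$ of a vector $y_0$ for which this orbit is not norm-dense (the existence of such $y_0$ is obtained by the Lomonosov-type fixed-point argument: otherwise one manufactures $T^*\in A^*$ with eigenvalue $1>\|T^*\|_e$, hence a finite-rank Riesz projection in $A^*$, contradicting non-WOT-density). One then applies Bishop--Phelps to $V$ itself, obtaining a support point $0\neq y\in V$ and a functional $x$ with $(x,y)=\sup\{\mathrm{Re}(x,w):w\in V\}$. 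Two structural facts then do all the work your separation argument was hoping to do: the balancedness of $V$ under scalars $|t|\le 1$ upgrades the sup to $(x,y)\ge|(x,w)|$ for all $w\in V$ with $(x,y)\ge 0$, and the semigroup property $F^2\subset F$ (submultiplicativity of the essential norm) gives $Fy\subset V$, hence $|(x,T^*y)|\le(x,y)$ for $T^*\in F$, which rescales by homogeneity in $T$ to (\ref{ineq2}). Your half-degenerate case is disposed of automatically: if $V=\{0\}$ then $(x,T^*y_0)=0$ for all $T\in A$ and any nonzero $x$ works. I recommend you develop the second of your two proposed directions along these lines; the first (adjoining $I$ and testing against $T=I$) does not appear to lead anywhere, since the extra inequality $|(x,y)|\le 1$ bounds $(x,y)$ from above, which is the wrong direction.
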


 Let us mention that if $A$ contains a non-zero compact operator then by \cite{Lom0} $A$ has a non-trivial invariant subspace $L$ in  $X$ (so $x$ can be chosen from $L$ and $y$ can be chosen from $L^\bot$).

 The original proof of Theorem \ref{L1} was essentially simplified by  Lindstrom  and Shluchtermann \cite{LSc}. For completeness, we give at the end of the paper a short proof of Theorem  \ref{L4}, unifying arguments from  \cite{LSc} and \cite{Lom4} (and correcting some inaccuracy in \cite{Lom4}) --- in this form it was not published before.

\section{Main results }

     In this section $X$  is a real Banach space (complex spaces are considered as real ones). The standard epimorphism from $\mathcal{B}(X)$ to the Calkin algebra   $\mathcal{C}(X) = \mathcal{B}(X)/\mathcal{K}(X)$ is denoted by $\pi$.

   Let us say that an element $a$  of a unital normed algebra is {\it positive}, if $\|1-\varepsilon a\|\le 1+o(\varepsilon)$  for $\varepsilon > 0$, $\varepsilon\to 0$. And let us say that an element $a$ is {\it real}, if $a^2$ is positive. An operator $T\in \mathcal{B}(X)$ is {\it essentially real}, if $\pi(T)$ is a real element of $\mathcal{C}(X)$.

  Clearly all selfadjoint operators in a Hilbert space are real. It is not difficult to check that Hermitian operators in a complex Banach space (defined by the condition $\|\exp(itT)\| = 1$ for $t\in \mathbb{R}$) are real. Many other operators, for instance, all involutions and all nilpotents of index two are also real. So we can see that the class of essentially real operators is very wide.

\begin{theorem}\label{LB} If $A\subset \mathcal{B}(X)$ is a commutative algebra of essentially real operators, then there exists a non-trivial closed subspace of $X^*$, invariant for the algebra $A^*= \{T^*: T \in A\}$.
\end{theorem}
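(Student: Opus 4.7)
The plan is to apply Theorem~\ref{L4} to $A$. A commutative subalgebra of $\mathcal{B}(X)$ is never WOT-dense when $\dim X \ge 2$ (the small-dimensional case is trivial), so Theorem~\ref{L4} produces non-zero $x \in X^{**}$, $y \in X^*$ with $(x,y) \ge 0$ and $|(x, T^* y)| \le \|T\|_e (x,y)$ for every $T \in A$. Adjoining the identity to $A$ (which preserves commutativity and essential reality, since $I$ is trivially real), we may assume $I \in A$.

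The easy case is $(x,y) = 0$: then $(x, T^* y) = 0$ for every $T \in A$, so the closed subspace $L := \overline{\mathrm{span}}\{T^* y : T \in A\}$ is $A^*$-invariant, contains $y \neq 0$, and lies inside the proper hyperplane $\ker x$, giving a non-trivial invariant subspace. In the remaining case $(x,y) > 0$, normalize to $(x,y) = 1$ and set $\phi(T) := (x, T^* y)$; this linear functional satisfies $\phi(I) = 1$ and $|\phi(T)| \le \|T\|_e$. Applying the latter bound to $I - \varepsilon T^2$ and invoking the essential-reality estimate $\|I - \varepsilon T^2\|_e \le 1 + o(\varepsilon)$ forces $\phi(T^2) \ge 0$ for every $T \in A$.

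The candidate invariant subspace in this case is
\[
L := \overline{\mathrm{span}}\bigl\{(T - \phi(T) I)^* y : T \in A\bigr\}.
\]
Each generator lies in $\ker x$, so $L \subset \ker x$ is proper; and if $L = \{0\}$ then $T^* y = \phi(T) y$ for every $T \in A$, making $\mathbb{R} y$ a one-dimensional $A^*$-invariant line. A short expansion using $ST = TS$ gives
\[
S^*(T - \phi(T) I)^* y = (TS - \phi(TS) I)^* y - \phi(T)(S - \phi(S) I)^* y + [\phi(TS) - \phi(T)\phi(S)]\,y.
\]
The first two summands belong to $L$, and since $y \notin \ker x \supset L$, the $A^*$-invariance of $L$ reduces exactly to the multiplicativity identity $\phi(TS) = \phi(T)\phi(S)$ for $S, T \in A$.

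Establishing this multiplicativity is the main obstacle. By polarization in the commutative algebra $A + \mathbb{R} I$ it reduces to $\phi(R^2) = \phi(R)^2$ for every $R$. The inequality $\phi(R^2) \ge \phi(R)^2$ would follow by a Cauchy--Schwarz argument from $\phi((R - \phi(R) I)^2) \ge 0$, once one verifies that the shift $R - \phi(R) I$ remains essentially real --- something one approaches through the commutative Banach subalgebra $\overline{\pi(A + \mathbb{R} I)} \subset \mathcal{C}(X)$ and its Gelfand representation. The reverse inequality is the delicate part: the natural attack is to iterate $\|I - \varepsilon R^2\|_e \le 1 + o(\varepsilon)$ to obtain $\|\exp(-sR^2)\|_e \le 1$ for $s \ge 0$, apply $\phi$, and use the norm-attainment $\phi(I) = 1 = \|\phi\|_e$ to pin $\phi$ down as a point evaluation on the joint essential spectrum of $A$ --- i.e.\ a character.
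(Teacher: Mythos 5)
Your argument tracks the paper exactly up to the point where you obtain $\phi(T^2)=(x,(T^2)^*y)\ge 0$ for all $T\in A$ (and your treatment of the degenerate case $(x,y)=0$ is fine). But from there your proof has a genuine gap: the whole construction of $L=\overline{\mathrm{span}}\{(T-\phi(T)I)^*y\}$ stands or falls with the multiplicativity $\phi(TS)=\phi(T)\phi(S)$, which you do not prove, and the route you sketch for it cannot work as stated. The functional $\phi$ is a norm-one functional on $\overline{\pi(A)}\subset\mathcal{C}(X)$ with $\phi(1)=1$, i.e.\ a state; but states on a commutative Banach (or even C*-) algebra are in general convex combinations, or integrals, of characters --- the condition $\phi(I)=1=\|\phi\|_e$ singles out the state space, not its extreme points, so it does not ``pin $\phi$ down as a point evaluation.'' Already for a single essentially selfadjoint operator $T$ with no point spectrum, a generic pair $x,y$ produced by Theorem~\ref{L4} gives a non-multiplicative $\phi$. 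There is also a secondary unresolved issue you flag yourself: positivity of $\phi((R-\phi(R)I)^2)$ needs $R-\phi(R)I$ to be essentially real, and the class of real elements is not obviously stable under translation by scalars (it is a statement about $a^2$, not about $a$).

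The paper avoids multiplicativity altogether by a variational argument. Having made $A$ closed and unital (so $\exp(T)\in A$), it writes $\exp(T)=(\exp(T/2))^2$ to upgrade $\phi(T^2)\ge0$ to $(x,\exp(T^*)y)\ge 0$ for all $T\in A$. The closed convex hull $K$ of $\{\exp(T^*)y:T\in A\}$ is then a proper closed convex set invariant under every $\exp(T^*)$; by Bishop--Phelps there is a support point $y_0\in K$ with supporting functional $x_0$, and since $t\mapsto(x_0,\exp(tT^*)y_0)$ is minimized at $t=0$, differentiation gives $(x_0,T^*y_0)=0$ for all $T\in A$, so $\overline{A^*y_0}$ is the desired invariant subspace. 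If you want to salvage your line of attack, this exponential-plus-support-point step is the missing idea: it extracts a \emph{new} pair $(x_0,y_0)$ with $x_0\perp A^*y_0$ exactly, rather than trying to force the original $\phi$ to be a character.
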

\begin{proof} Note that the set of all positive elements of a Banach algebra is a convex cone. Moreover this cone is norm-closed. Indeed let $a = \lim_{n\to \infty}a_n$ where all $a_n$ are positive. If  $a$ is not positive then there is a sequence $\varepsilon_n \to 0$  and a number   $C> 0$, such that $\|1 - \varepsilon_n a\| > 1 + C\varepsilon_n$ for all  $n$. Taking $k$ with  $\|a-a_k\| < C/2$, we get that $\|1 - \varepsilon_na_k\| > 1 + C\varepsilon_n - \|a-a_k\|\varepsilon_n > 1 + (C/2)\varepsilon_n$, which is a contradiction to positivity of  $a_k$.

  Hence  the set of real elements is also norm closed. Applying this to $\mathcal{C}(X)$ we see that the set of essentially real operators is norm-closed as well. This allows us to assume that the algebra $A$ is norm-closed. Obviously we may assume also that $A$ is unital. Therefore  $\exp(T)\in A$, for each  $T\in A$.

 Since $A$ is commutative it is not WOT-dense in $\mathcal{B}(X)$. Applying Theorem \ref{L4}, we find non-zero vectors $x\in X^{**}, y\in X^*$, such that the condition (\ref{ineq2}) holds.

Therefore, for $T\in A$ and $\varepsilon \searrow 0$, we have
    $$(x,(1-\varepsilon (T^2)^*)y) \le \|1-\varepsilon T^2\|_e(x,y) \le (1+o(\varepsilon))(x,y),$$
hence \;\;\;$-\varepsilon(x, (T^2)^*y)  \le o(\varepsilon)(x,y)$ and $(x,(T^2)^*y)\ge 0$, because $(x,y)\ge 0$.

Since $\exp(T) = (\exp(T/2))^2$, we get that
\begin{equation}\label{1}
 (x,\exp(T^*)y) \ge 0 \text{ for }T\in A.
 \end{equation}
 Let $K$ be the closed convex envelope of the set $M = \{\exp(T^*)y: T\in A\}\subset X^*$. Since $M$ is invariant under all operators $\exp(T^*)$, the same is true for $K$.
 
 Since $K\neq X^*$ (by (\ref{1})) and $K$ is not a singleton (otherwise we trivially have an invariant subspace $\mathbb{C}y$), the boundary $\partial K$ of $K$ is not a singleton.
 By the Bishop-Phelps theorem \cite{BPh}, the set of support points is dense in $\partial K$, so there is a non-zero functional $x_0\in X^{**}$ supporting $K$ in some point $0\neq y_0\in K$. That is
 $$(x_0,y_0) \le (x_0,z) \text{ for all } z\in K.$$
 By the above arguments, $\exp(T^*)y_0\in K$ for $T\in A$, therefore $$ (x_0,y_0) \le (x_0,\exp(T^*)y_0) \text{ for all } T\in A.$$
 It follows that for each $T\in A$, the function $\phi(t) = (x_0,\exp(tT^*)y_0)$ has a minimum at $t = 0$. For this reason $\phi'(0) = 0$ and
 $(x_0,T^*y_0) = 0$. Hence the subspace $A^*y_0$ is not dense in $H$, and its norm-closure $\overline{A^*y}$ is a non-trivial invariant subspace.
\end{proof}

\begin{corollary}\label{LBref} A commutative algebra of essentially real operators in a reflexive real Banach space has a non-trivial invariant subspace.
\end{corollary}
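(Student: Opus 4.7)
The strategy is to apply Theorem~\ref{LB} to obtain an $A^*$-invariant subspace inside $X^*$ and then transfer it back to $X$ by duality, using reflexivity of $X$ to ensure the transfer is non-degenerate.

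Concretely, I would start by applying Theorem~\ref{LB} to the commutative algebra $A$ of essentially real operators, obtaining a non-trivial norm-closed subspace $L\subset X^*$ with $T^*L\subset L$ for every $T\in A$. Then I would form the pre-annihilator
$$M = L_{\perp} = \{x\in X : \langle x,f\rangle = 0 \text{ for all } f\in L\},$$
which is a norm-closed subspace of $X$. Its $A$-invariance is immediate from the $A^*$-invariance of $L$: for $x\in M$, $T\in A$ and $f\in L$ we have $\langle Tx,f\rangle = \langle x,T^*f\rangle = 0$ because $T^*f\in L$.

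The remaining work is to verify that $M$ is non-trivial. The inequality $M\neq X$ is free: a nonzero $f\in L$ cannot vanish on all of $X$, so $L\neq\{0\}$ forces $M\subsetneq X$. For $M\neq\{0\}$ I would invoke reflexivity in the following way. Since $X$ is reflexive, the weak and weak-$*$ topologies on $X^*$ coincide, and hence by Mazur's theorem the norm-closed convex set $L$ is automatically weak-$*$ closed. The bipolar theorem for the duality $\langle X,X^*\rangle$ then gives $(L_{\perp})^{\perp} = L$, so if $L_{\perp}$ were $\{0\}$ we would conclude $L = X^*$, contradicting $L\neq X^*$.

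The only genuine obstacle in the argument is showing $M\neq\{0\}$, and this is precisely where reflexivity is used. Without it a norm-closed subspace of $X^*$ need not be weak-$*$ closed and could have trivial pre-annihilator, so the conclusion of Theorem~\ref{LB} would not transfer to an invariant subspace of $X$ itself.
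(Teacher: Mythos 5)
Your proof is correct and is exactly the duality argument the paper leaves implicit: Corollary~\ref{LBref} is stated without proof as an immediate consequence of Theorem~\ref{LB}, the point being that in a reflexive space the pre-annihilator of the $A^*$-invariant subspace $L\subset X^*$ is a non-trivial $A$-invariant subspace of $X$. Your verification of the two non-triviality conditions (in particular the use of Mazur's theorem and the bipolar theorem to rule out $L_{\perp}=\{0\}$) is accurate and complete.
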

Since the algebra generated by a commutative family of essentially selfadjoint operators consists of essentially selfadjoint operators we get the following result:

\begin{corollary}\label{L} Any commutative family of essentially selfadjoint operators in a real Hilbert space has a non-trivial invariant subspace.
\end{corollary}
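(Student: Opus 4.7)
The plan is to deduce this directly from Corollary \ref{LBref}. A real Hilbert space is reflexive, so what remains is to produce a commutative algebra of essentially real operators that contains the given family; a subspace invariant for the algebra is then automatically invariant for the family.

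First I would verify that every essentially selfadjoint operator is essentially real. For a selfadjoint operator $S$ on a Hilbert space the spectral theorem gives $0 \le \varepsilon S^2 \le 1$ in the operator order whenever $\varepsilon \le 1/\|S\|^2$, so $\|1-\varepsilon S^2\| \le 1$ for all sufficiently small $\varepsilon > 0$; thus $S^2$ is positive in $\mathcal{B}(X)$ and $S$ is real. The Calkin algebra $\mathcal{C}(X)$ is a C*-algebra, so the very same functional-calculus argument applies to any selfadjoint element of $\mathcal{C}(X)$. Hence if $T = S+K$ with $S$ selfadjoint and $K$ compact, then $\pi(T) = \pi(S)$ is selfadjoint, hence real, in $\mathcal{C}(X)$, which means $T$ is essentially real.

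Next, let $A$ be the (real) algebra generated by the given commutative family. In $\mathcal{C}(X)$ the generators $\pi(T)$ are pairwise commuting selfadjoint elements, and in any C*-algebra the product of two commuting selfadjoint elements is selfadjoint, since $(ab)^* = b^*a^* = ba = ab$. Together with the obvious fact that sums and real scalar multiples of selfadjoint elements remain selfadjoint, this shows that $\pi(A)$ consists entirely of selfadjoint elements of $\mathcal{C}(X)$. Therefore every element of $A$ is essentially selfadjoint, and by the previous paragraph essentially real.

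Finally, Corollary \ref{LBref} applied to the commutative algebra $A$ of essentially real operators on the reflexive real Banach space $X$ produces a non-trivial closed invariant subspace for $A$, and this subspace is invariant for the original family. There is no real obstacle: once the observation that a commuting product of selfadjoint elements in a C*-algebra is selfadjoint has been made, the result is immediate from Corollary \ref{LBref}.
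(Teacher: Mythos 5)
Your proof is correct and takes essentially the same route as the paper, which deduces this corollary from Corollary~\ref{LBref} via precisely your observation that the algebra generated by a commutative family of essentially selfadjoint operators consists of essentially selfadjoint (hence essentially real) operators. You merely fill in the details the paper leaves implicit (reflexivity of a Hilbert space, and that selfadjoint elements are real).
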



Returning to individual criteria of continuity let us consider the class $(E)$ of operators $T$ such that all polynomials $p(T)$ of $T$ are essentially real.

\begin{corollary}\label{LB2} Each operator $T\in (E)$ in a reflexive real Banach space has a non-trivial invariant subspace.
\end{corollary}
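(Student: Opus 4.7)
The plan is to apply Corollary \ref{LBref} directly. Let $A$ be the unital algebra of all polynomials in $T$, viewed as a subalgebra of $\mathcal{B}(X)$.

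First I would verify the two hypotheses of Corollary \ref{LBref} for $A$. The algebra $A$ is commutative, being generated by the single operator $T$. Every element of $A$ has the form $p(T)$ for some polynomial $p$, and by the defining condition of the class $(E)$ each such $p(T)$ is essentially real. (The only element that needs separate attention is the identity $I$, which is trivially essentially real since $\|1-\varepsilon I\|_e = |1-\varepsilon| \le 1$ for small $\varepsilon > 0$, so the required positivity of $\pi(I)^2 = \pi(I)$ is immediate.) Consequently $A$ is a commutative algebra of essentially real operators in the reflexive real Banach space $X$, and Corollary \ref{LBref} yields a non-trivial closed subspace $L\subset X$ invariant under every operator in $A$. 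Since $T\in A$, this $L$ is the desired invariant subspace for $T$.

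I do not anticipate any serious obstacle, since the heavy lifting has already been done in Theorem \ref{LB} and Corollary \ref{LBref}. The only conceptual point worth emphasizing is that the hypothesis $T\in(E)$ (namely, that \emph{all} polynomials in $T$ are essentially real, and not merely $T$ itself) is precisely what is needed to pass from the single operator to the commutative polynomial algebra. This strengthening is genuinely necessary, because the class of essentially real operators is not obviously closed under sums or products --- for instance, knowing that $\pi(T)^2$ is positive in the Calkin algebra does not by itself imply that $\pi(T)^4 = \pi(T^2)^2$ is positive --- so one cannot simply generate the algebra from essential reality of $T$ alone.
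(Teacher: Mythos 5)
Your proof is correct and is exactly the argument the paper intends (the paper states Corollary \ref{LB2} without proof precisely because it follows by applying Corollary \ref{LBref} to the commutative algebra of polynomials in $T$, whose elements are essentially real by the definition of the class $(E)$). Your closing remark about why essential reality of all polynomials in $T$, rather than of $T$ alone, is the right hypothesis is also on point.
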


Atzmon, Godefroy and Kalton \cite{AGK}  introduced the class $(S)$ of all operators, satisfying the condition
\begin{equation}\label{S}
\|p(T)\|_e \le \sup\{|p(t)|: t\in L\}, \text{ for each polynomial } p,
\end{equation}
 where $L$ is a compact subset of $\mathbb{R}$. It was proved in \cite{AGK} that all operators in $(S)$ have invariant subspaces. It is not difficult to see that $(S)\subset (E)$ (indeed if $T\in (S)$ then $\|1- \varepsilon p(T)^2\|_e \le \sup \{ |1-\varepsilon p(t)^2|: t\in L\} \le 1$ if $\varepsilon$ is sufficiently small) , so this result follows from Corollary \ref{LB2}.

\section{Proof of Theorem \ref{L4}}

 Without loss of generality one can assume that  $A$ is norm-closed. Since the algebra $A$   is not WOT-dense in $\mathcal{B}(X)$, the algebra $A^*$   is not WOT-dense in $\mathcal{B}(X^*)$. Suppose, aiming at the contrary, that $A^*$ is transitive (has no invariant subspaces).  Set $F = \{T^*\in A^*: \|T\|_e < 1\}$ and fix $\varepsilon \in (0, \frac{1}{10})$.  Choose $y_0\in X^*$ with $\|y_0\| = 3$ and let $S$ be the ball $\{y\in X^*: \|y-y_0\| \le 2\}$.

Let us suppose  firstly that $Fy$ is dense in $X^*$ for each non-zero $y\in X^*$. Then the same is true for $\varepsilon Fy$. It follows that for every $y\in S$  there exists $T^*_y\in \varepsilon F$ with $\|T^*_yy - y_0\|<1$. By the definition of  $F$,  $T^*_y = R^*_y + K^*_y$, where $\|R_y\| < \varepsilon$, $K_y \in \mathcal{K}(X)$. Thus $\|K^*_yy - y_0\| \le \|T^*_yy-y_0\| + \|R^*_yy\| <  1 + 5\varepsilon$ (since $\|y\| \leq 5$, for each $y\in S$). Let $\tau$ denote the (relative) *-weak topology  of $S$, then compactness of  $K_y$ implies that $K_y^*$ continuously maps $(S,\tau)$ to $(X^*, \|\cdot\|)$. Therefore, for each $y\in S$, there is a $\tau$-neighborhood  $V_y$  of  $y$   such that $\|K^*_yz - y_0\| < 1 +5\varepsilon$, for each $z\in V_y$, and $\|T^*_yz-y_0\|< 1+5\varepsilon + 5\varepsilon < 2$. In other words $T^*_y$ maps $V_y$ to $S$.

The sets $V_y$, $y\in S$, form an open covering of   $S$. Since $S$ is $\tau$-compact there is a finite subcovering $\{V_{y_i}: 1\le i\le n\}$. Let $\{\varphi_i: 1\le i\le n\}$ be a partition of unity related to this subcovering. We define a $\tau$-continuous map $\Phi: S \to S$ by $\Phi(y) = \sum_{i=1}^n\varphi_i(y)T^*_{y_i}(y)$. By Tichonov's Theorem, $\Phi$ has a fixed point $z\in S$. This means that $T^*z = z$, where $T^* = \sum_{i=1}^n\varphi_i(z)T^*_{y_i}$. Since the set $\varepsilon F$ is convex we get that  $T^* \in \varepsilon F$ and $\|T^*\|_e \leq \|T\|_e < \frac{1}{10}$.

 For this reason 1 is an eigenvalue of $T^*$ exceeding $\|T^*\|_e$ and hence it is an isolated point in $\sigma(T^*)$.  The corresponding Riesz projection is of finite rank and belongs to $A^*$. But it is well known (see e.g. \cite[Theorem 8.2]{RR}), that a transitive algebra containing a non-zero finite rank operator is $WOT$-dense in the algebra of all operators. Since $A^*$ is not $WOT$-dense in $\mathcal{B}(X^*)$, we obtain a contradiction.

  Hence there exists $y_0\in X^*$ such that $Fy_0$ is not norm-dense in  $X^*$. Let $V$ be the norm-closure of $Fy_0$.  If  $V = \{0\}$ then we have the inequality (\ref{ineq2}) with $y=y_0$ and any non-zero $x$.

  If  $V \neq \{0\}$ then $V$ is a norm-closed convex proper subset of $X^*$ containing more then one point. By the Bishop - Phelps Theorem  \cite{BPh}, there are  $0\neq y \in V$ and $0\neq x \in X^{**}$ such that  $Re(x,y) = \sup \{Re(x,w): w\in V\}$. Since the set $V$ is invariant under multiplication by any  number $t$ with $|t| \le 1$, we have $(x,y) \geq 0$ and $(x,y) \geq |(x,w)|$ for all $ w\in V$.  Since $F^2\subset F$, we have that $Fy\subset V$ and $|(x,T^*y)| \le (x,y)$, for any $T^*\in F$. Therefore  the inequality  $|(x,T^*y)|\le \|T\|_e(x,y)$ holds for all $T\in A$. $~~\blacksquare$

\medskip

\medskip

\noindent Dept of Math.\\
\noindent Kent State University\\
\noindent Kent OH 44242, USA \\
lomonoso@math.kent.edu

\medskip

\noindent Dept of Math.\\
\noindent Vologda State University\\
\noindent Vologda 160000, Russia\\
shulman.victor80@gmail.com


\begin{thebibliography}{99}

\bibitem{MSL} Livsic M.S., On the spectral resolution of linear non-selfadjoint operators, Mathem. Sbornik, 34(76), (1954), 145-198 (Amer. Math. Soc. Transl. (2) 5 (1957), 67-114).
\bibitem{Sah}  Sahnovich L.A., On reduction of a non-selfadjoint operators to the triangular form, Izvestia Vuzov. Mathematika, 1959,4,141-149.
\bibitem{GK} Gohberg I.Ts., Krein M.G., On completely continuous operators with spectrum concentrated in zero, Soviet Math. Dokl., 128 (1959),2,  227-230.
\bibitem{M1} Macaev V.I., On Volterra operators which are perturbations of selfadjoint ones, Soviet Math. Dokl., 139(1961),4,  810-814.
\bibitem{Sch} Schwartz J., Subdiagonalization of operators in Hilbert space with compact imaginary part, Comm. Pure Appl. Math. 15(1962), 159-172.
\bibitem{M2} Macaev V.I., On a class of completely continuous operators, Soviet Math. Dokl., 139(1961),3,  548-552.
\bibitem{Lom2} Lomonosov V., On real invariant subspaces of bounded operators with compact imaginary part, Proc. Amer. Math. Soc. 115(1992), 775-777.
\bibitem{Lom3} Lomonosov V., An extension of Burnside's theorem to infinite-dimensional spaces, Israel J. Math. 75(1991), 329-339.
\bibitem{Sc} Simonic A., An extension of Lomonosov's techniques to non-compact operators, Trans. Amer. Math. Soc. 348(1996), 975-995.
\bibitem{A} Atzmon A., Reducible representations of abelian groups, Ann.Inst.Fourier 51(2001), 1407-1418.
\bibitem{AGK} Atzmon A., Godefroy G., Kalton N.J., Invariant subspaces and the exponential map, Positivity 8(2004), no 2, 101-107.
\bibitem{SofGr} Grivaux S., Invariant subspaces for tridiagonal operators, Bull. sci. math. 126(2002), 681-691.
\bibitem{Lom4} Lomonosov V.I., Positive functionals on general operator algebras, J.Math.Anal.Appl., 245(2000), 221-224.
\bibitem{Lom0} Lomonosov V.I., Invariant subspaces for operators commuting with compact operators, Functional Analysis and Applications, 7(1973), 213-214.
\bibitem{LSc} Lindstrom  M., Shluchtermann  G., Lomonosov's technique and Burnside's theorem, Canad. Math. Bull. 43(2000), 87-89.
\bibitem{RR} Radjavi H., Rosenthal P., "Invariant subspaces"\;, Springer Verlag, 1970.
\bibitem{BPh} Bishop E., Phelps R.R., The support functionals of a convex set, Proc. Symp. of Pure Math. VII, Covexity, Amer. Math. Soc., 1963, 27-36.

\end{thebibliography}
\end{document}